  \newtheorem{theorem}{Theorem}[section]
  \newtheorem{corollary}[theorem]{Corollary}
  \newtheorem{proposition}[theorem]{Proposition}
  \newtheorem{lemma}[theorem]{Lemma}
  \newtheorem{question}[theorem]{Question}
  \theoremstyle{definition}
  \newtheorem{example}[theorem]{Example}
  \def\R{{\mathbb R}}
  \def\yy{{\mathcal Y}}
  \def\zz{{\mathcal Z}}
  \def\fii{\varphi}
  \numberwithin{equation}{section}
  \title[Wijsman Hyperspaces: Subspaces and Embeddings]
  {Wijsman Hyperspaces: Subspaces and\\ Embeddings}
  \author[J. Cao]{Jiling Cao$^\dagger$}
  \address{School of Computing and Mathematical Sciences,
  Auckland University of Technology, Private Bag 92006, Auckland
  1142, New Zealand}
  \email{jiling.cao@aut.ac.nz}
  \author[H.~J.~K. Junnila]{Heikki J. K. Junnila$^\ddagger$}
  \address{Department of Mathematics and Statistics, The
  University of Helsinki, P.~O. Box 68, FI-00014,  Helsinki,
  Finland}
  \email{heikki.junnila@helsinki.fi}
  \author[W.~B. Moors]{Warren B. Moors}
  \address{Department of Mathematics, The University of
  Auckland, Private Bag 92019, Auckland, New Zealand}
  \email{moors@math.auckland.ac.nz}
  \thanks{\hspace{-1.66em} 2010 \emph{Mathematics Subject
  Classification.}
  Primary 54B20, 54E35; Secondary 54C25, 54B10.}
  \thanks{\noindent \emph{Keywords}. Wijsman hyperspace, metric space,
  embedding, Tychonoff,
  zero-dimensional}
  \thanks{\noindent $^\dagger$This paper was partially
  written when the first author was in a Research and Study
  Leave from July to December 2009. He would like to thank the
  Department of Mathematics and Statistics at the University
  of Helsinki for hospitality.}
  \thanks{\noindent $^\ddagger$The paper was eventually completed
  when the second author visited Auckland in November 2010.
  He would like to thank the School of Computing and
  Mathematical Sciences at the Auckland University of
  Technology for hospitality.}
  \date{}
  \dedicatory{Dedicated to the memory of Professor Jun-iti
  Nagata}
\begin{document}

  \begin{abstract}
  In this paper, topological properties of Wijsman hyperspaces
  are investigated. We study the existence of isolated points in
  Wijsman hyperspaces. We show
  that every Tychonoff space can be embedded as a closed subspace
  in the Wijsman hyperspace of a complete metric space which is
  locally $\mathbb R$.
  \end{abstract}

  \maketitle

  \section{Introduction}

  In this paper, we consider the set $2^{X}$ consisting of all
  non-empty closed subsets of a metric space $(X,d)$, equipped
  with the \emph{Wijsman topology} $\tau_{w(d)}$. In the following,
  we denote the \emph{Wijsman hyperspace} $(2^{X},\tau_{w(d)})$
  of a metric space $(X,d)$ by $2^{(X,d)}$. The easiest way to
  describe the space $2^{(X,d)}$ is through an identification:
  identify a non-empty closed subset $S$ of $X$ with the distance
  function $d(\cdot,S)$. In this way $2^{X}$ is identified with
  a subset of the function space $C(X)$ and the Wijsman topology
  is the topology of pointwise convergence on this subset.

  We refer the reader to \cite{beer:93} and \cite{beer:94} for
  background on Wijsman hyperspaces.

  Lechicki and Levi showed in \cite{lechicki-levi:87} that the
  Wijsman hyperspace of a separable metric space is metrizable.
  There has been a considerable effort to explore completeness
  properties of Wijsman topologies, and one line of research was
  completed with the result of Costantini \cite{costantini:95}
  that the Wijsman hyperspace of a Polish metric space is Polish.

  Besides metrizability and various completeness properties,
  other topological properties of Wijsman hyperspaces have not
  been widely studied. In this paper we give results which show
  that Wijsman hyperspaces of topologically simple non-separable
  metric spaces can have very complicated topologies. The first
  result of this kind in the literature is an example, due to
  Costantini \cite{costantini:98} of an uncountable discrete
  complete metric space $(X,d)$ such that $2^{(X,d)}$ is not
  \v Cech-complete. In Section 3 below, we give a general result
  which yields Costantini's result as well as some later results
  as corollaries.

  \section{Isolated points of Wijsman hyperspaces}
  \label{sec:overview}

  In this section, we give three examples to demonstrate
  various possibilities on the existence of isolated
  points in Wijsman hyperspaces.

  The first example is due to Chaber and Pol
  \cite[Remark 3.1]{chaber-pol:02}.

  \begin{example} \label{exam:discrete2}
  \emph{Let $\delta$ be the $0$-$1$ metric on a set $X$. Then
  $2^{(X,\delta)}$ is homeomorphic to
  $\{0,1\}^X\setminus\{ {\bf 0}\}$, where $\{ 0,1\}$ is
  discrete and ${\bf 0}$ is the constant function
  with value 0. If $X$ is infinite, then
  $2^X$ has no isolated points.}
  \end{example}

  The above example and well-known properties of the Cantor cube
  $\{0,1\}^{X}$ show that the Wijsman hyperspace
  $2^{(X,\delta)}$ of a 0-1 metric space $(X,\delta)$
  is locally compact and satisfies the countable chain condition.
  Moreover, if $|X|\le 2^{\omega}$, then $2^{(X,\delta)}$ is separable.

  The Wijsman hyperspace of an infinite metric
  space is non-discrete. Nevertheless, Wijsman hyperspaces
  may have many isolated points.

  \begin{example} \label{exam:discrete3}
  \emph{For every set $X$, there exists a discrete metric $d$
  on $X$ such that every singleton subset of $X$ is an isolated point
  in $2^{(X,d)}$.}
  \end{example}

  \begin{proof}
  To avoid a triviality, let $X$ be infinite. Express $X$ as the
  union of a family of pairwise disjoint two-point subsets, that is,
  write $X=\bigcup \{X_\alpha: \alpha\in A\}$, where $X_\alpha=
  \{x_\alpha^0, x_\alpha^1\}$ with $x_\alpha^0\ne x_\alpha^1$
  for each $\alpha \in A$ and
  $X_\alpha\cap X_\beta =\emptyset$ whenever $\alpha \ne\beta$.
  Define $d: X \times X \to \{0,1,2\}$ by setting
  \[
  d(x,y) =\left \{
  \begin{array}{ll}
  0, & \mbox{if $x= y\,$;}\\
  2, & \mbox{if $\{x,y\}=X_\alpha$ for some $\alpha\,$;}\\
  1, & \mbox{otherwise.}
  \end{array}
  \right.
  \]
  It can be checked that $d$ is a metric on $X$. Let $\alpha\in A$
  and $i \in \{0,1\}$. Note that $d(x_\alpha^{1-i},x_\alpha^{i})=2$
  and $d(x_\alpha^{1-i},z)\leq 1$ for every $z\ne x_\alpha^i$.
  As a consequence, we have
  $\left\{ F \in 2^X: d(x_\alpha^{1-i},F)>1\right\}
  =\{\{x_\alpha^i\}\}$, and the set
  $\{\{x_\alpha^i\}\}$ is thus open in $2^{(X,d)}$.
  \end{proof}

  \begin{example} \label{exam:discrete4}
  \emph{A discrete metric $d$ on $\mathbb N$ such that every
  non-empty finite subset of $\mathbb N$ is an
  isolated point in $2^{({\mathbb N},d)}$.}
  \end{example}

  \begin{proof}
  Define $d$ by the formula $d(n,k)=|2^{-n} -2^{-k}|$ for all
  $n, k \in \mathbb N$, and note that $d$ is a discrete metric
  on $\mathbb N$.

  Let $E \subset \mathbb N$ be non-empty and finite.
  Set $m=2+\max E$. The set
  \[
  {\mathcal W} =\{F \in 2^{\mathbb N}: |d(k, F)-d(k,E)|
  <2^{-m-1} \mbox{ for each } k \le m\}
  \]
  is a neighborhood of $E$ in $2^{({\mathbb N},d)}$. We show that
  ${\mathcal W} =\{E\}$.

  Let $F \in {\mathcal W}$. To show that $F=E$, we first show
  that $F \cap [1,m] =E \cap [1,m]$. Let $k\le m$. Since
  $F\in{\mathcal W}$,
  we have $|d(k, F)-d(k,E)|<2^{-m-1}$. Thus, if $k\in E$, then
  $d(k,F) <2^{-m-1}$. Since
  \[
  d(k, \mathbb N \setminus \{k\})= d(k, k+1) =2^{-k} -
  2^{-k-1} = 2^{-k-1}\geq 2^{-m-1},
  \]
  it follows that $k\in F$. On the other hand, if $k\not
  \in E$, then
  \[
  d(k,E)\ge d(k, \mathbb N \setminus \{k\})
  = 2^{-k-1}\geq 2^{-m-1}
  \]
  and it follows that
  \[
  d(k,F) > d(k,E)-2^{-m-1} \ge 2^{-m-1} -2^{-m-1} =0,
  \]
  which implies that $k\not \in F$. Hence, we have shown that
  $F\cap [1,m] =E\cap [1, m].$

  To conclude the proof of $F=E$, it suffices to show that
  $F\subseteq [1,m]$. Assume on the contrary that there exists
  $k\in F$ with $k>m$. Then we have that
  \[d(m,F)\leq d(m,k)=2^{-m}-2^{-k}<2^{-m}.
  \]
  Since $m=2+\max E$, we have that
  \[d(m,E)=2^{-m+2}-2^{-m}>2^{-m+1},
  \]
  and it follows that
  \[|d(m,F)-d(m,E)|>2^{-m+1}-2^{-m}=2^{-m}.
  \]
  This, however, is a contradiction since $F\in{\mathcal W}$.
  \end{proof}

  \begin{question}
  Does there exist an uncountable metric space $(X,d)$ such that
  every non-empty finite subset of $X$ is an isolated point
  in $2^{(X,d)}$?
  \end{question}

  \section{ On subspaces of
  Wijsman Hyperspaces}\label{sec:embeddings}

  In this section, we investigate those topological spaces
  which can be embedded as closed subspaces in Wijsman
  hyperspaces of various metric spaces. We start with a
  simple observation.

  \begin{proposition} \label{prop:01metric}
  A $T_1$-space $T$ is zero-dimensional and locally
  compact if, and only if, $T$ embeds as a closed subspace
  in $2^{(X,\delta)}$ for some 0-1 metric space $(X,\delta)$.
  \end{proposition}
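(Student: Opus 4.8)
The plan is to use Example~\ref{exam:discrete2} to replace the target $2^{(X,\delta)}$ by the more tractable space $\{0,1\}^{X}\setminus\{\mathbf 0\}$, and then to characterise its closed subspaces. For the necessity direction this is immediate: $\{0,1\}^{X}$ is compact, Hausdorff and zero-dimensional, so its open subspace $\{0,1\}^{X}\setminus\{\mathbf 0\}$ is locally compact, Hausdorff and zero-dimensional. Since local compactness passes to closed subspaces of Hausdorff spaces, and a base of clopen sets restricts to any subspace, any $T$ that embeds as a closed subspace must be $T_{1}$ (in fact Hausdorff), locally compact and zero-dimensional.

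For the converse I would first record two routine facts: a $T_{1}$ space with a base of clopen sets is Hausdorff, and in a locally compact zero-dimensional Hausdorff space the compact clopen sets form a base (given a compact neighbourhood $K$ of $x$ and a clopen $C$ with $x\in C\subseteq\operatorname{int}K$, the set $C$ is a compact clopen neighbourhood of $x$). Form the one-point compactification $T^{*}=T\cup\{\infty\}$, which is compact and Hausdorff. The key preliminary step is to verify that $T^{*}$ is again zero-dimensional: each point of $T$ still has a base of compact clopen neighbourhoods, while every neighbourhood of $\infty$ contains one of the form $T^{*}\setminus U$, where $U$ is a finite union of compact clopen subsets of $T$; such $U$ is compact and clopen in $T^{*}$, so $T^{*}\setminus U$ is a clopen neighbourhood of $\infty$.

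Now let $\mathcal C$ be the family of all compact clopen subsets of $T$ and define $e\colon T^{*}\to\{0,1\}^{\mathcal C}$ by $e(p)(C)=1$ if $p\in C$ and $0$ otherwise. Each coordinate is continuous because every $C\in\mathcal C$ is clopen in $T^{*}$, and $e$ is injective because $\mathcal C$ separates the points of $T^{*}$: distinct points of $T$ are separated by a compact clopen set, and any $x\in T$ is separated from $\infty$ by a compact clopen neighbourhood of $x$. Since $T^{*}$ is compact and $\{0,1\}^{\mathcal C}$ is Hausdorff, $e$ is an embedding whose image is compact, hence closed.

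The decisive point, and the step I expect to require the most care, is the choice to index the cube only by the \emph{compact} clopen subsets of $T$ rather than by all clopen sets of $T^{*}$. Because no $C\in\mathcal C$ contains $\infty$, we obtain $e(\infty)=\mathbf 0$, so that $e(T)=e(T^{*})\setminus\{\mathbf 0\}=e(T^{*})\cap\bigl(\{0,1\}^{\mathcal C}\setminus\{\mathbf 0\}\bigr)$. As $e(T^{*})$ is closed in $\{0,1\}^{\mathcal C}$, this exhibits $e\uhr T$ as an embedding of $T$ onto a closed subspace of $\{0,1\}^{\mathcal C}\setminus\{\mathbf 0\}$. Finally, Example~\ref{exam:discrete2} identifies $\{0,1\}^{\mathcal C}\setminus\{\mathbf 0\}$ with $2^{(X,\delta)}$ for $X=\mathcal C$ carrying the $0$-$1$ metric, which completes the embedding.
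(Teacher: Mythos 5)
Your proof is correct, and its skeleton is the same as the paper's: handle the easy direction via Example~\ref{exam:discrete2}, and for the hard direction pass to the one-point compactification $T^{*}$ and embed it in a Cantor cube so that $T$ becomes a closed subspace of the punctured cube. The one genuine difference is how the puncture is placed at $\mathbf 0$: the paper takes an arbitrary embedding $\fii\colon T^{*}\to\{0,1\}^{\kappa}$ (citing the standard fact that compact zero-dimensional spaces embed in Cantor cubes) and then invokes homogeneity of $\{0,1\}^{\kappa}$ to move $\fii(\infty)$ to $\mathbf 0$, whereas you build the embedding explicitly, indexing the cube by the compact clopen subsets of $T$ so that $e(\infty)=\mathbf 0$ automatically. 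Your variant buys self-containedness --- no appeal to homogeneity, and you also verify the facts the paper leaves implicit (zero-dimensionality of $T^{*}$, the clopen base of compact sets, point-separation) --- at the cost of length; the paper's appeal to homogeneity is in any case cheap, since $\{0,1\}^{\kappa}$ is a topological group under coordinatewise addition mod 2 and translation by $\fii(\infty)$ does the job. Both arguments are complete and correct.
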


  \begin{proof} Sufficiency follows immediately from Example
  \ref{exam:discrete2}.
  To prove necessity, let $T$ be a zero-dimensional and locally
  compact $T_1$-space. The one-point compactification $T^*=T \cup
  \{\infty\}$ is zero-dimensional and compact. There exists an
  embedding $\fii: T^*\to\{0,1\}^\kappa$ for some cardinal
  $\kappa$. The compact set $\fii(T^*)$ is closed in
  $\{0,1\}^\kappa$, and it follows that $\fii|T$ is an embedding
  of $T$ onto a closed subspace of
  $\{0,1\}^\kappa \setminus \{\fii(\infty)\}$. By homogeneity of
  $\{0,1 \}^\kappa$ and Example \ref{exam:discrete2} again,
  $\{0,1\}^\kappa \setminus \{\fii(\infty)\}$ is homeomorphic to
  the Wijsman hyperspace $2^{(X,\delta)}$
  of a 0-1 metric space $(X, \delta)$.
  \end{proof}

  To obtain some deeper results on embeddings, we first
  establish the following key lemma.

  \begin{lemma}\label{lem:multi}
  Let $\{(X_\alpha, d_\alpha): \alpha \in A\}$ be a family
  of mutually disjoint complete metric spaces such that for
  each $\alpha\in I$, the set $E_\alpha =d_\alpha(X_\alpha
  \times X_\alpha)$ is a subset
  of the closed unit interval $\mathbb I$. Then there
  exists a compatible complete metric $d$ on the free sum
  $X=\bigoplus_{\alpha\in A}X_\alpha$ such that the
  product space $\prod_{\alpha\in A}2^{(X_\alpha,d_\alpha)}$
  embeds as a closed subspace in $2^{(X,d)}$.
  Moreover, $d(X\times X)\subseteq \bigcup_{\alpha\in A}
  E_\alpha\cup\{2\}$.
  \end{lemma}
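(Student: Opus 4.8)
The plan is to construct the metric $d$ on the free sum so that distances within each summand are preserved, and distances between distinct summands are pushed up to the constant value $2$. Concretely, I would define $d(x,y)=d_\alpha(x,y)$ when $x,y$ both lie in $X_\alpha$, and $d(x,y)=2$ when $x\in X_\alpha$, $y\in X_\beta$ with $\alpha\ne\beta$. The triangle inequality holds because each $E_\alpha\subseteq\mathbb I$, so intra-summand distances are at most $1$ while the cross distance $2$ dominates any path through a third point; this also gives the claimed range $d(X\times X)\subseteq\bigcup_\alpha E_\alpha\cup\{2\}$. Completeness of $d$ follows since any $d$-Cauchy sequence is eventually trapped in a single $X_\alpha$ (once terms are within distance $<2$ of each other) and then converges by completeness of $d_\alpha$. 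The topology $d$ induces on $X$ is the free-sum topology, so each $X_\alpha$ is clopen in $X$.

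Next I would write down the embedding $\Phi\colon\prod_{\alpha}2^{(X_\alpha,d_\alpha)}\to 2^{(X,d)}$. The natural candidate is $\Phi\bigl((S_\alpha)_{\alpha\in A}\bigr)=\bigcup_{\alpha\in A}S_\alpha$. Since each $S_\alpha$ is a non-empty closed subset of the clopen set $X_\alpha$, the union is closed in $X$ and non-empty, so $\Phi$ does land in $2^{(X,d)}$. Injectivity is clear because one recovers $S_\alpha$ as $\Phi((S_\beta))\cap X_\alpha$. The heart of the verification is computing the distance function of the union: for a point $x\in X_\alpha$ one has
\[
d\bigl(x,\textstyle\bigcup_\beta S_\beta\bigr)=\min\Bigl\{d_\alpha(x,S_\alpha),\,2\Bigr\}=d_\alpha(x,S_\alpha),
\]
because every point outside $X_\alpha$ sits at distance $2$ while $d_\alpha(x,S_\alpha)\le 1$. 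Thus the distance function of $\Phi((S_\beta))$ restricted to $X_\alpha$ is exactly the distance function of $S_\alpha$. Since the Wijsman topology is pointwise convergence of distance functions, this identity converts pointwise convergence of the union's distance functions into coordinatewise pointwise convergence of the $S_\alpha$'s, which is precisely convergence in the product $\prod_\alpha 2^{(X_\alpha,d_\alpha)}$; this establishes that $\Phi$ is a homeomorphism onto its image.

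Finally I would verify that the image is closed in $2^{(X,d)}$. The image consists exactly of those $F\in 2^{(X,d)}$ that meet every $X_\alpha$, i.e.\ $F\cap X_\alpha\ne\emptyset$ for all $\alpha$. I would characterize this condition via distance functions: for each $\alpha$, fixing a point $x\in X_\alpha$, the condition $F\cap X_\alpha\ne\emptyset$ is equivalent to $d(x,F)<2$ (since otherwise $F$ avoids $X_\alpha$ entirely and every point of $F$ is at distance $2$ from $x$). The set $\{F:d(x,F)<2\}$ is open, but I actually want the complement direction, so I would instead argue that the image is closed by showing its complement is open: if $F$ misses some $X_\alpha$, pick $x\in X_\alpha$; then $d(x,F)=2$, and the basic Wijsman neighborhood $\{G:d(x,G)>1\}$ of $F$ consists entirely of sets missing $X_\alpha$, hence lies in the complement. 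I expect the main obstacle to be bookkeeping in the closedness argument, making sure the single-point test genuinely detects the "meets every summand" condition and that the chosen neighborhoods stay inside the complement; the distance computation for the union, while conceptually the crux, is essentially forced once the metric $d$ collapses all cross-distances to the constant $2$.
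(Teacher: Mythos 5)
Your proposal is correct and follows essentially the same route as the paper's proof: the same sum metric with all cross-summand distances equal to $2$, the same union map $\Phi$, the same key identity $d(x,\bigcup_\beta S_\beta)=d_\alpha(x,S_\alpha)$ for $x\in X_\alpha$ (which the paper phrases as $\Phi$ carrying a subbase of the product onto a subbase of its image, rather than as pointwise convergence of distance functions), and the same single-point neighborhood $\{G: d(x,G)>1\}$ for closedness of the image.
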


  \begin{proof}
  Set
  ${\mathcal Y}=\prod_{\alpha\in A}2^{(X_\alpha,d_\alpha)}$.
  Equip $X$ with the metric $d$ defined by
  \[
  d(x,y) =\left \{
  \begin{array}{ll}
  d_\alpha(x,y),
  & \mbox{if $x, y\in
  X_\alpha$ for some $\alpha \in A$;}\\[0.5em]
  2, & \mbox{otherwise.}
  \end{array}
  \right.
  \]
  It is easy to see that $d$ is a compatible complete metric
  for $X$ and the inclusion $d(X\times X)\subseteq
  \bigcup_{\alpha\in A} E_\alpha\cup\{2\}$ holds.
  To complete the proof, we show that ${\mathcal
  Y}$ embeds as a closed subspace in $2^{(X,d)}$. To this
  end, we define a mapping $\varphi: {\mathcal Y} \to
  2^{(X,d)}$ by the formula
  \[
  \varphi(\langle F_\alpha\rangle_{\alpha \in A})= \bigcup
  \{F_\alpha: \alpha \in A\}\,.
  \]

  Denote by $\zz$ the subspace $\fii(\yy)$ of $2^{(X,d)}$.
  Note that we have
  $\zz=\{F\in 2^{X}: F\cap X_\alpha\ne\emptyset\mbox{ for every
  }\alpha\in A\}$. The mapping $\fii$ has an inverse $\fii^{-1}:
  \zz\to\yy$ defined by the formula
  $\fii^{-1}(F)=\langle F\cap X_\alpha\rangle_{\alpha \in A}$. As a
  consequence, $\varphi$ is one-to-one. Next we verify that $\varphi$
  is a homeomorphism $\yy\to\zz$.

  The space $\yy$ has a subbase consisting of sets of the form
  \[
  \Gamma_{\nu,x,a,b}=\{\langle F_\alpha\rangle_{\alpha \in A}\in\yy:
  a<d_\nu(x,F_\nu)<b\}\,,
  \]
  where $\nu\in A$, $x\in X_\nu$
  and $a,b\in\R$.

  Note that if $x\in X_\nu$ and $F\in\zz$, then
  $d(x,F)=d(x,F\cap X_\nu)=d_\nu(x,F\cap X_\nu)$. It follows that
  we have
  \[
  \fii(\Gamma_{\nu,x,a,b})=\{F\in\zz: a<d_\nu(x,F\cap X_\nu)<b\}
  =\{F\in\zz: a<d(x,F)<b\}\,.
  \]
  The relative Wijsman topology of $\zz$ has a subbase consisting
  of sets of the form $\{F\in\zz: a<d(x,F)<b\}$, where
  $x\in X$ and $a,b\in\R$. Hence we have shown that the one-to-one
  mapping $\fii$ transforms a subbase of $\yy$ onto a subbase of
  $\zz$. As a consequence, $\fii$ is a homeomorphism $\yy\to\zz$.

  Finally, we verify that $\zz$ is a closed
  subspace of $2^{(X,d)}$. Let $F\in 2^X\setminus\zz$.
  Then $F \cap X_\alpha =\emptyset$ for some $\alpha\in A$.
  Pick a point $x_0 \in X_\alpha$, and put
  \[
  \mathcal U =\left\{B\in 2^X: d(x_0,B) > 1
  \right\}.
  \]
  Then $\mathcal U$ is an open neighborhood
  of $F$ in $2^{(X,d)}$ such that
  ${\mathcal U} \cap\zz = \emptyset$.
  \end{proof}

  \begin{lemma} \label{lem:finite}
  If $d$ is a finite-valued metric on a nonempty set $X$,
  then $2^{(X,d)}$ is zero-dimensional.
  \end{lemma}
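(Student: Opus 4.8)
The plan is to show that the standard subbase of the Wijsman topology on $2^{(X,d)}$ consists of clopen sets; since finite intersections of clopen sets are clopen, this yields a base of clopen sets and hence establishes zero-dimensionality (the space is Tychonoff, in particular $T_1$, being a subspace of $C(X)$ with the topology of pointwise convergence, as noted in the Introduction). Recall that this subbase consists of the sets $\{F\in 2^X: a<d(x,F)<b\}$ with $x\in X$ and $a,b\in\R$.

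The key observation is that, because $d$ is finite-valued, each distance functional $F\mapsto d(x,F)$ takes only finitely many values. Indeed, set $V=d(X\times X)$, a finite subset of $[0,\infty)$. For any nonempty $F\in 2^X$ the set $\{d(x,y): y\in F\}$ is a nonempty finite subset of $V$, so the infimum $d(x,F)$ is attained and lies in $V$. Thus $d(x,F)\in V$ for every $F\in 2^X$ and every $x\in X$.

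Next I would check that, for each $x\in X$ and each $v\in V$, the level set $L_{x,v}=\{F\in 2^X: d(x,F)=v\}$ is clopen. Writing $V=\{v_0<v_1<\cdots<v_n\}$ and choosing reals $a,b$ with $v_{i-1}<a<v_i<b<v_{i+1}$ (with the obvious conventions at the two ends), one has $L_{x,v_i}=\{F: a<d(x,F)\}\cap\{F: d(x,F)<b\}$, an intersection of two subbasic open sets, while its complement equals $\{F: d(x,F)<a\}\cup\{F: d(x,F)>b\}$, a union of subbasic open sets; hence $L_{x,v_i}$ is clopen. Since every subbasic set $\{F: a<d(x,F)<b\}$ coincides with the finite union $\bigcup\{L_{x,v}: v\in V\cap(a,b)\}$, it too is clopen, and finite intersections then give a base of clopen sets for $2^{(X,d)}$.

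I expect no serious obstacle here: the only point requiring care is the attainment of the infimum defining $d(x,F)$, which is immediate from the finiteness of $V$ and forces the distance functional into a finite value set. Everything else is the routine passage from a subbase of clopen sets to a base of clopen sets, together with the already-noted fact that Wijsman hyperspaces are $T_1$.
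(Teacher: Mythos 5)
Your proof is correct and rests on exactly the same observation as the paper's: since $d$ is finite-valued, every distance $d(x,F)$ lies in the finite set $E=d(X\times X)$, so the Wijsman (pointwise) topology is the one induced from $E^X$ with $E$ discrete. The paper states this in one line as an embedding $2^{(X,d)}\hookrightarrow C_p(X,E)\subseteq E^X$ and invokes hereditary zero-dimensionality of such a power; your level-set argument is just the explicit unwinding of that embedding into a verification that the subbasic Wijsman sets are clopen.
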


  \begin{proof}
  Let $d(X\times X) =E$. Then $2^{(X,d)}$
  embeds in $C_p(X,E) \subseteq E^X$, where $E$ is equipped
  with the discrete topology. Thus, the conclusion follows.
  \end{proof}

  A metric space $(X,d)$ is \emph{uniformly discrete} if
  $X$ is $\varepsilon$-discrete for some $\varepsilon>0$.

  \begin{question} \label{ques:discrete}
  Can we replace ``finite-valued" in Lemma \ref{lem:finite}
  by ``discrete" or ``uniformly discrete"?
  \end{question}

  We only have the following partial answer to Question
  \ref{ques:discrete}. Following \cite{en:89}, we call a
  topological space $X$
  \emph{totally disconnected} if every quasi-component of
  $X$ is a singleton.

  \begin{proposition}
  Let $(X,d)$ be a discrete metric space. Then $2^{(X,d)}$
  is totally disconnected.
  \end{proposition}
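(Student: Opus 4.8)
The plan is to show that $2^{(X,d)}$ is totally disconnected by proving that any two distinct points $F, G \in 2^X$ lie in different quasi-components, equivalently that there is a clopen set containing $F$ but not $G$. Since $F \neq G$, without loss of generality there is a point $x_0 \in G \setminus F$ (otherwise swap roles). Because $(X,d)$ is discrete, the singleton $\{x_0\}$ is open, so there is $\varepsilon > 0$ with $d(x_0, z) \geq \varepsilon$ for all $z \neq x_0$; in particular, since $F$ is closed and $x_0 \notin F$, we have $d(x_0, F) > 0$. The natural candidate for the separating set is something like $\mathcal V = \{B \in 2^X : d(x_0, B) > r\}$ or its complement, for a suitably chosen threshold $r$. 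The first task is therefore to pin down a value $r$ so that $F$ and $G$ fall on opposite sides.

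The key structural fact I would exploit is that, in a discrete metric space, the distance-to-a-set function takes values in a set that is "spread out" near any isolated point, so the basic Wijsman subbasic sets $\{B : d(x_0,B) > r\}$ are frequently clopen rather than merely open. Concretely, I would argue as follows. The set $\mathcal V_r = \{B \in 2^X : d(x_0, B) > r\}$ is open for every $r$ by definition of the Wijsman topology. I would then show that for a carefully chosen $r$ (for instance $r$ strictly between $0$ and $\min\{\varepsilon, d(x_0,F)\}$, where $\varepsilon$ is the discreteness radius at $x_0$), the complementary set $\{B : d(x_0, B) \leq r\} = \{B : d(x_0, B) < r'\}$ for a slightly larger $r'$ is also open. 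The crucial point is that because $x_0$ is isolated with radius $\varepsilon$, no set $B$ can have $0 < d(x_0, B) < \varepsilon$: either $x_0 \in B$, giving $d(x_0,B)=0$, or $x_0 \notin B$, giving $d(x_0,B) \geq \varepsilon$. Hence for $0 < r < \varepsilon$ the two sets $\{B : d(x_0,B) = 0\}$ and $\{B : d(x_0,B) \geq \varepsilon\}$ partition $2^X$, and $\mathcal V_r$ equals one part while its complement equals the other.

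The main step, and the one I expect to be the real obstacle, is verifying that $\mathcal V_r$ is \emph{closed} and not just open, i.e. that its complement is open in the Wijsman topology. The complement is $\{B : d(x_0, B) \le r\}$, and since no positive distance below $\varepsilon$ occurs, this complement is exactly $\{B : d(x_0, B) < \varepsilon\} = \{B : d(x_0, B) = 0\} = \{B : x_0 \in B\}$. This latter set is a standard Wijsman-open ``hit'' set, being $\{B : d(x_0,B) < \varepsilon\}$, which is open by definition. Thus $\mathcal V_r$ is clopen. Since $x_0 \in G$ forces $d(x_0, G) = 0 \le r$, we get $G \notin \mathcal V_r$, while $d(x_0, F) > r$ gives $F \in \mathcal V_r$. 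Therefore $F$ and $G$ are separated by a clopen set.

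Finally I would assemble these pieces into the quasi-component statement. Fix an arbitrary $F \in 2^X$ and let $G$ be any point of $2^X$ with $G \neq F$. The construction above produces a clopen set separating $F$ from $G$ (choosing $x_0$ in the symmetric difference and applying discreteness at $x_0$). Since the quasi-component of $F$ is the intersection of all clopen sets containing $F$, and we have just shown every $G \neq F$ is excluded from at least one such clopen set, the quasi-component of $F$ reduces to $\{F\}$. As $F$ was arbitrary, every quasi-component of $2^{(X,d)}$ is a singleton, so $2^{(X,d)}$ is totally disconnected, completing the proof.
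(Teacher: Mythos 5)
Your proof is correct and takes essentially the same route as the paper: both exploit that at an isolated point $x_0$ the distance $d(x_0,B)$ is either $0$ or at least the discreteness radius, so that $\{B\in 2^X : d(x_0,B)>r\}$ (equivalently, the hit set $\{B\in 2^X : x_0\in B\}$) is clopen in the Wijsman topology for suitable $r$, and such sets separate the points of $2^X$. The paper merely packages this as the family $\mathcal G_x=\{F\in 2^X : x\in F\}$, $x\in X$, with $r_x=\tfrac12 d(x,X\setminus\{x\})$ playing the role of your threshold $r\in(0,\varepsilon)$.
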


  \begin{proof}
  Let $x\in X$. Let $r_x={{1\over 2}}d(x,X\setminus\{x\})$ and
  note that $r_x>0$. The set
  \[
  {\mathcal G}_x = \{F\in 2^X: d(x,F)<r_x\}=\{F\in 2^{X}: x\in F\}
  \]
  is open in $2^{(X,d)}$, and it is also closed, because
  $2^{X}\setminus G_x=\{F\in 2^X: d(x,F)>r_x\}$.

  The family $\{{\mathcal G}_x: x\in X\}$ of clopen sets
  separates the points of $2^X$ and hence $2^{(X,d)}$ is totally
  disconnected.
  \end{proof}

  \begin{theorem} \label{thm:0-dim}
  A $T_1$-space $T$ is zero-dimensional if, and only if,
  $T$ embeds as a closed subspace in the Wijsman hyperspace
  $2^{(X,d)}$ of a metric space $(X,d)$
  with a 3-valued metric $d$ on $X$.
  \end{theorem}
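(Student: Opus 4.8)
For sufficiency suppose $T$ is a closed subspace of $2^{(X,d)}$ for a $3$-valued (hence finite-valued) metric $d$. By Lemma \ref{lem:finite} the space $2^{(X,d)}$ is zero-dimensional, and since possession of a clopen base passes to subspaces and the $T_1$ axiom is hereditary, $T$ is a zero-dimensional $T_1$-space. The substance of the theorem is the necessity, for which the plan is to feed $0$-$1$ metrics into Lemma \ref{lem:multi}. If each $d_\alpha$ is the $0$-$1$ metric on $X_\alpha$ then $E_\alpha\subseteq\{0,1\}$, so the metric $d$ produced by Lemma \ref{lem:multi} takes values in $\{0,1,2\}$, and by Example \ref{exam:discrete2} each factor $2^{(X_\alpha,d_\alpha)}$ is the punctured Cantor cube $\{0,1\}^{X_\alpha}\setminus\{\mathbf 0\}$. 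Thus Lemma \ref{lem:multi} already realises every product $\yy=\prod_{\alpha\in A}\bigl(\{0,1\}^{X_\alpha}\setminus\{\mathbf 0\}\bigr)$ as a closed subspace of a Wijsman hyperspace with a $3$-valued metric, and it suffices to embed $T$ as a closed subspace of such a product.

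To this end I would first embed $T$ into a Cantor cube in the usual way: as $T$ is zero-dimensional and $T_1$, the family $\mathcal B$ of all clopen subsets of $T$ is a point-separating base, so $e\colon T\to\{0,1\}^{\mathcal B}$, $e(t)=(\chi_U(t))_{U\in\mathcal B}$, is an embedding. Put $K=\overline{e(T)}$; this is a zero-dimensional compactum in which $e(T)$ is dense, and the only obstruction to closedness is the remainder $\partial=K\setminus e(T)$. The device for removing it is to append, for each boundary point $p\in\partial$, an entire block of coordinates $X_p=\{(p,V):V\subseteq K\text{ clopen},\ p\notin V\}$, where the coordinate $(p,V)$ carries the value $\chi_V$; these blocks, together with the original coordinates of $e$ grouped into complementary pairs $\{U,U^{c}\}$, index a larger cube $\{0,1\}^{\lambda}$, and the resulting map $E\colon T\to\{0,1\}^{\lambda}$ is again an embedding since it refines $e$.

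The crux is to verify that $E(T)$ is closed in $\yy=\{g\in\{0,1\}^{\lambda}:g\uhr X_\alpha\neq\mathbf 0\text{ for every block }\alpha\}$. Because every adjoined coordinate is the restriction of a continuous function on $K$, the map $E$ extends to a continuous injection $\tilde E\colon K\to\{0,1\}^{\lambda}$; as $K$ is compact this $\tilde E$ is an embedding, $\overline{E(T)}=\tilde E(K)$, and $\overline{E(T)}\setminus E(T)=\tilde E(\partial)$. On each complementary-pair block a point of $T$ has exactly one coordinate equal to $1$, and on each block $X_p$ the zero-dimensionality of $K$ supplies a clopen $V$ with $e(t)\in V$ and $p\notin V$, so $E(t)$ is nonzero on every block and $E(T)\subseteq\yy$. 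On the other hand each $\tilde E(p)$ vanishes identically on its own block $X_p$, so $\tilde E(\partial)$ is disjoint from $\yy$; hence $E(T)=\overline{E(T)}\cap\yy$ is closed in $\yy$. Composing with the embedding furnished by Lemma \ref{lem:multi} then displays $T$ as a closed subspace of a Wijsman hyperspace with a $3$-valued metric. I expect this final closedness check to be the main obstacle: the blocks $X_p$ must be engineered so that the deleted set $\{g:\exists\alpha,\ g\uhr X_\alpha=\mathbf 0\}$ captures the whole remainder $\partial$ while meeting no point of $E(T)$.
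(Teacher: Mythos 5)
Your proof is correct, but the necessity half follows a genuinely different route from the paper's. Both arguments share the same outer scaffolding: embed $T$ in a Cantor cube, use Example \ref{exam:discrete2} to view punctured cubes as Wijsman hyperspaces of $0$-$1$ metric spaces, and apply Lemma \ref{lem:multi} with $0$-$1$ metrics to obtain the $3$-valued metric. The difference lies in how $T$ gets realized as a closed subspace of a product of punctured cubes. The paper takes one factor $Y_y=\{0,1\}^\kappa\setminus\{y\}$ for \emph{every} point $y$ of $\{0,1\}^\kappa\setminus T$, uses homogeneity of the cube to identify each $Y_y$ with a cube punctured at $\mathbf 0$ (hence, via Example \ref{exam:discrete2}, with a Wijsman hyperspace), and recovers $T$ as the diagonal $\Delta$ of $\prod_y Y_y$: a constant tuple $\langle a\rangle_y$ lies in the product precisely when $a\in\bigcap_y Y_y=T$, and $\Delta$ is closed because diagonals in products of Hausdorff spaces are closed. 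You, instead, only have to kill the remainder $\partial=K\setminus e(T)$ of the closure $K=\overline{e(T)}$, and you do it by hand: each $p\in\partial$ gets its own block of coordinates (clopen sets of $K$ missing $p$) on which $\tilde E(p)$ vanishes identically while every $E(t)$ is nonzero (by zero-dimensionality of $K$), and the complementary-pair blocks keep $E(T)$ inside $\mathcal Y$; closedness then follows because $\tilde E$ is injective (its composition with the projection onto the original coordinates is the inclusion $K\hookrightarrow\{0,1\}^{\mathcal B}$), so $\overline{E(T)}=\tilde E(K)$ by compactness, $\overline{E(T)}\setminus E(T)=\tilde E(\partial)$ misses $\mathcal Y$, and $E(T)=\tilde E(K)\cap\mathcal Y$ is closed in $\mathcal Y$. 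The paper's diagonal trick buys brevity: closedness of $\Delta$ is automatic, and homogeneity of $\{0,1\}^\kappa$ is free since the cube is a topological group. Your construction is longer but more self-contained: it needs neither homogeneity nor the diagonal, and its blocks are indexed by the (typically much smaller) closure remainder rather than by the whole complement of $T$ in the cube. Your sufficiency direction coincides with the paper's (Lemma \ref{lem:finite} plus heredity of zero-dimensionality), and the crux you flagged at the end — that the deleted set captures all of $\partial$ while meeting no point of $E(T)$ — checks out, so there is no gap.
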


  \begin{proof}
  Sufficiency follows immediately from Lemma \ref{lem:finite}.
  To prove necessity, suppose that $T$ is zero-dimensional.
  Take a base ${\mathfrak B} =\{B_\alpha: \alpha<\kappa\}$
  consisting of clopen subsets. It is well-known that $T$
  embeds in $\{0,1\}^\kappa$ by the mapping $\varphi: T \to
  \{0,1\}^\kappa$ defined by $\varphi(x)=\chi_{A_x}$, where
  $\chi_{A_x}$ is the characteristic function of the set
  $A_x=\{\alpha <\kappa: x \in B_\alpha\}$. So, we can
  assume that $T\subseteq \{0,1\}^\kappa$. Let $\widetilde T = \{0,1
  \}^\kappa \setminus T$. For every $y\in \widetilde T$, let $Y_y =
  \{0,1\}^\kappa \setminus \{y\}$. By homogeneity of $\{0,
  1\}^\kappa$ and Example \ref{exam:discrete2}, for each
  $y \in \widetilde T$, there exists a 0-1 metric space $(X_y, d_y)$
  such that $Y_y$ is homeomorphic to
  $2^{(X_y,d_y)}$. We can choose the sets $X_y$,
  $y\in\widetilde T$, to be
  mutually disjoint. It follows from
  Lemma \ref{lem:multi} that there exists a 3-valued metric
  $d$ (with values 0, 1, 2) on the free
  sum $X=\bigoplus\{X_y: y \in \widetilde T\}$ such that the product
  space $\prod\{Y_y: y \in \widetilde T\}$ embeds as a closed subspace
  in $2^{(X,d)}$. Finally, it is
  routine to check that the diagonal
  \[
  \Delta = \left\{\langle a_y \rangle_{y\in \widetilde T} \in
  \prod\left\{Y_y: y \in \widetilde T\right\}: a_y =
  a_{y'} \mbox{ for all } y, y' \in \widetilde T\right\}
  \]
  is a closed subspace of $\prod\{Y_y: y \in \widetilde T\}$ which
  is homeomorphic to $T$.
  Therefore, we conclude that $T$ embeds as a closed
  subspace in $2^{(X,d)}$.
  \end{proof}

  Note that Proposition \ref{prop:01metric} and Theorem
  \ref{thm:0-dim} explain why Costantini was able to use
  a 3-valued but not 2-valued metric in his example
  of a complete metric space whose Wijsman
  hyperspace is not \v{C}ech-complete. Let us also note
  that as a consequence of Theorem \ref{thm:0-dim}, we can
  give a ``3-valued solution" to Zsilinszky's problem
  in \cite{zsilinszky:98}. The original solution, by
  Chaber and Pol \cite{chaber-pol:02}, used a non-discrete
  metric space.

  \begin{corollary} \label{coro:hbaire}
  The space $\mathbb Q$ of rationals embeds as a closed
  subspace in $2^{(X,d)}$ for some 3-valued metric space $(X,d)$.
  Consequently, there exists a 3-valued metric space
  whose Wijsman hyperspace is not hereditarily Baire.
  \end{corollary}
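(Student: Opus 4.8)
The plan is to derive both assertions directly from Theorem \ref{thm:0-dim} together with elementary properties of the rationals. First I would check that $\mathbb{Q}$, with its usual topology, is a zero-dimensional $T_1$-space: it is metrizable and hence $T_1$, while the sets $(a,b)\cap\mathbb{Q}$ with $a,b$ irrational are clopen and form a base, so $\mathbb{Q}$ has a base consisting of clopen sets. Applying Theorem \ref{thm:0-dim} to $T=\mathbb{Q}$ then produces a $3$-valued metric space $(X,d)$ in which $\mathbb{Q}$ embeds as a closed subspace. This settles the first assertion.

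For the second assertion, I would recall that $\mathbb{Q}$ is not a Baire space: being countable with no isolated points, it is a union of countably many nowhere dense singletons, hence meager in itself. By the usual definition, a space is hereditarily Baire precisely when each of its nonempty closed subspaces is Baire. Since the copy of $\mathbb{Q}$ obtained above is a closed subspace of $2^{(X,d)}$ that fails to be Baire, the hyperspace $2^{(X,d)}$ cannot be hereditarily Baire, which is exactly what is claimed.

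I do not expect any genuine obstacle here, as the argument is an immediate application of Theorem \ref{thm:0-dim}, and every ingredient---the zero-dimensionality and $T_1$-ness of $\mathbb{Q}$, its failure of the Baire property, and the characterization of hereditary Baireness through closed subspaces---is standard. The one point warranting a little care is to invoke the correct formulation of hereditary Baireness, namely the version requiring the Baire property of \emph{closed} rather than arbitrary subspaces, so that the embedded closed copy of $\mathbb{Q}$ legitimately serves as a counterexample.
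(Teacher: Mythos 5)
Your proposal is correct and coincides with the paper's intended argument: the paper states this corollary without proof as an immediate consequence of Theorem \ref{thm:0-dim}, relying on exactly the facts you supply ($\mathbb{Q}$ is a zero-dimensional $T_1$-space, $\mathbb{Q}$ is meager in itself hence not Baire, and hereditary Baireness is tested on closed subspaces). Your attention to the closed-subspace formulation of hereditary Baireness is precisely the point that makes the closed embedding from Theorem \ref{thm:0-dim} sufficient.
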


  We close the paper with an embedding result for
  Tychonoff spaces. It provides a generic solution to
  problems dealing with closed-hereditary properties of
  Wijsman hyperspaces, and hence it extends some earlier results
  such as those by Costantini, Chaber and Pol mentioned above.

  \begin{theorem} \label{thm:universal}
  Every Tychonoff space can be embedded as a closed
  subspace in the Wijsman hyperspace of a complete metric
  space which is locally $\mathbb R$.
  \end{theorem}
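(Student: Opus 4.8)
The plan is to imitate the proof of Theorem \ref{thm:0-dim}, replacing the Cantor cube $\{0,1\}^\kappa$ by the Tychonoff cube $C=[0,1]^\kappa$, and replacing the two-point space $\{0,1\}$ (whose role there was played by the hyperspace of a $0$-$1$ metric space) by the hyperspace of a one-dimensional manifold. Since $T$ is Tychonoff, I would fix an embedding $T\subseteq C$ with $\kappa$ infinite, and recall that $C=[0,1]^\kappa$ is homogeneous (Keller's theorem together with $([0,1]^\omega)^\kappa\cong[0,1]^\kappa$). Put $\widetilde T=C\setminus T$. Exactly as in Theorem \ref{thm:0-dim}, it suffices to produce, for each $y\in\widetilde T$, a closed embedding of the punctured cube $C\setminus\{y\}$ into a Wijsman hyperspace $2^{(X_y,d_y)}$ of a complete, locally $\mathbb{R}$ metric space with $d_y(X_y\times X_y)\subseteq[0,1]$. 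Indeed, Lemma \ref{lem:multi} then embeds $\prod_{y\in\widetilde T}2^{(X_y,d_y)}$, and hence $\prod_{y}(C\setminus\{y\})$, as a closed subspace of $2^{(X,d)}$ with $X=\bigoplus_y X_y$ complete and locally $\mathbb{R}$; and the constant-tuple diagonal $\{\langle a\rangle_{y}:a\in C\}\cap\prod_y(C\setminus\{y\})$ is a closed copy of $T$, since a constant tuple $\langle a\rangle$ lies in the product precisely when $a\neq y$ for all $y\in\widetilde T$, i.e.\ when $a\in T$. By homogeneity of $C$ we have $C\setminus\{y\}\cong C\setminus\{y_0\}$ for a fixed $y_0$, so everything reduces to realizing a single punctured cube.

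To realize $C\setminus\{y_0\}$, I would take $X_0=\bigoplus_{\alpha<\kappa}\mathbb{R}_\alpha$, a free sum of $\kappa$ copies of the line, metrized by a bounded complete metric, topologically equivalent to the usual one and bounded by $1$ inside each $\mathbb{R}_\alpha$, with points of different summands placed at distance $1$. Then $X_0$ is complete, locally $\mathbb{R}$, and $d_0$ takes values in $[0,1]$. A nonempty closed $F\subseteq X_0$ is determined by $\langle F\cap\mathbb{R}_\alpha\rangle_\alpha$, and for $z\in\mathbb{R}_\alpha$ the value $d_0(z,F)$ depends only on $F\cap\mathbb{R}_\alpha$, equalling $1$ exactly when $F$ misses $\mathbb{R}_\alpha$. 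Writing $K_\alpha$ for the pointwise closure in $[0,1]^{\mathbb{R}_\alpha}$ of the distance functions of $2^{(\mathbb{R}_\alpha,\cdot)}$ (equivalently the one-point ``escape'' completion adding the constant function $\mathbf 1_\alpha$ that records an empty intersection), the map $F\mapsto\langle d_0(\,\cdot\,,F)\uhr\mathbb{R}_\alpha\rangle_\alpha$ identifies $2^{(X_0,d_0)}$ with a subspace of $\prod_{\alpha}K_\alpha$ contained in the complement of the single point $p=\langle\mathbf 1_\alpha\rangle_\alpha$, namely the excluded value ``$F=\emptyset$''. The decisive difference from the compact case is that, because the lines are non-compact, $\mathbf 1_\alpha$ is a genuine limit of ordinary distance functions (slide the nearest point of $F$ off to infinity), so that $p$ is \emph{not} an isolated corner; had I used circles instead, the hyperspace would be the clean compact product $\prod_\alpha\bigl(2^{(S^1_\alpha)}\sqcup\{\ast\}\bigr)\setminus\{p\}$, but there $p$ is a non-generic point (its quasi-component is a singleton), so that route realizes a puncture at the ``wrong'' point.

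The heart of the matter, and the step I expect to be the main obstacle, is to show that the completed coordinate space $K_\alpha$ is a nondegenerate compact metrizable absolute retract, and that $2^{(X_0,d_0)}$ is in fact \emph{all} of $\bigl(\prod_\alpha K_\alpha\bigr)\setminus\{p\}$ (i.e.\ that the completion adds nothing beyond the escape point $\mathbf 1_\alpha$). Granting this, a countable block of coordinates gives $\prod_{n}K_{\alpha_n}\cong Q$, the Hilbert cube, whence $\prod_{\alpha<\kappa}K_\alpha\cong Q^\kappa\cong[0,1]^\kappa=C$; moreover $p$ then becomes a generic point of $C$, so that $2^{(X_0,d_0)}\cong C\setminus\{p\}$ is the desired punctured cube and we may take $y_0=p$. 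Compactness and metrizability of $K_\alpha$ follow from equicontinuity of distance functions (Ascoli) and separability of $\mathbb{R}_\alpha$, and the absolute-retract property should come from a contraction of the hyperspace, e.g.\ via the enlargements $F\mapsto\{x:d_0(x,F)\le t\}$ sweeping $F$ to the whole line. But the genuinely hard parts are verifying that no extra limit functions appear in $K_\alpha$ and that $K_\alpha$ is an $AR$; these are where infinite-dimensional topology is unavoidable, and I would handle them by recognizing $K_\alpha$ as a Fell-type hyperspace of the line and invoking the Curtis--Schori--West theorem $2^{S^1}\cong Q$ together with Toru\'nczyk's characterization of the Hilbert cube (a compact metric $AR$ with the disjoint-cells property). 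Once $K_\alpha\cong Q$ and the coordinatewise identification are secured, the diagonal construction of the first paragraph delivers $T$ as a closed subspace of the Wijsman hyperspace of the complete, locally $\mathbb{R}$ metric space $X=\bigoplus_{y\in\widetilde T}\bigoplus_{\alpha<\kappa}\mathbb{R}_{y,\alpha}$.
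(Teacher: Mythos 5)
Your outer reduction coincides exactly with the paper's: embed $T$ in $C={\mathbb I}^\kappa$, use homogeneity of the cube to reduce to one punctured cube, feed the family $\{C\setminus\{y\}: y\in\widetilde T\}$ into Lemma \ref{lem:multi}, and recover $T$ as the constant-tuple diagonal. (Your insistence that $d_y(X_y\times X_y)\subseteq[0,1]$ actually fits the hypotheses of Lemma \ref{lem:multi} more faithfully than the paper's own application does.) Where you genuinely diverge is the realization of the punctured cube. The paper never tries to show that the whole hyperspace of a sum of lines \emph{is} a punctured cube: it takes $Z_{\mathbf 0}$ to be a sum of $\kappa$ copies of the interval $I=(0,2)$ and embeds ${\mathbb I}^\kappa\setminus\{\mathbf 0\}$ as a closed subspace by the explicit map $\varphi(y)=\bigcup_\alpha(0,y_\alpha]\times\{\alpha\}$, encoding the coordinate $y_\alpha$ by a nested family of sets closed in $I$; injectivity, closedness of the image, and the homeomorphism property are then checked by the same elementary subbase computation as in Lemma \ref{lem:multi}. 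Your route instead identifies \emph{all} of $2^{(X_0,d_0)}$ with $\bigl(\prod_\alpha K_\alpha\bigr)\setminus\{p\}$ and must then recognize each $K_\alpha$ as a Hilbert cube. Thus your assertion that infinite-dimensional topology is ``unavoidable'' is refuted by the paper itself: it is forced only by your decision to realize the hyperspace globally as a punctured cube, which is strictly more than the theorem requires.

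As submitted, your argument is incomplete at exactly the two points you flag. Claim (i), that the pointwise closure $K_\alpha$ adds only the escape function $\mathbf 1_\alpha$, is true and provable (it is compactness of the Fell hyperspace $\mathrm{CL}(\mathbb R)\cup\{\emptyset\}$ of the proper space $\mathbb R$, transported through truncation), but you do not prove it. Claim (ii), $K_\alpha\cong Q$, does \emph{not} follow from Curtis--Schori--West for $2^{S^1}$ alone: under the standard identification of the Fell hyperspace with a containment hyperspace of the one-point compactification, $K_\alpha\cong\{K\in 2^{S^1}:\ast\in K\}$, so you need Curtis's theorem on growth (containment) hyperspaces of Peano continua, or a genuine verification of Toru\'nczyk's criteria --- being a compact AR (contractibility via enlargements is far weaker) plus the disjoint-cells property --- none of which your sketch supplies. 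So your program is, I believe, completable, but it has a real gap at its heart and is much heavier than necessary. On the credit side, your emphasis on the escape phenomenon is precisely the delicate point of any such construction: the within-summand metric must be chosen so that distances to sets sliding off to infinity converge to the cross-summand separation value. Indeed, with an \emph{arbitrary} complete metric $\rho\le 1$ on $(0,2)$ and cross-distance $2$, as the paper literally writes, the map $\varphi$ fails to be continuous at points $y$ with $y_\alpha=0$ (the $\alpha$-th distance jumps from values $\le 1$ to $2$); your normalization --- cross-distance $1$ and a truncated push-forward of the usual metric of $\mathbb R$ --- is exactly what is needed to make the construction work.
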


  \begin{proof}
  Let $T$ be a Tychonoff space.
  By a classical result, there exists an infinite cardinal $\kappa$ such that
  $T$ can be embedded into the Tychonoff cube ${\mathbb I}^\kappa$.
  Hence, we may assume that $T \subseteq {\mathbb I}^\kappa$.

  We show that for every $y \in {\mathbb I}^\kappa$, there
  is a locally $\mathbb R$ and complete metric space $(Z_y,
  d_y)$ such that ${\mathbb I}^\kappa \setminus \{y\}$ embeds
  as a closed set in $2^{(Z_y,d_y)}$.
  Since ${\mathbb I}^\kappa$ is homogeneous (see
  \cite{keller:31} and \cite{mill:07}), it suffices to show
  that the assertion holds for $y={\bf 0}$. Let $I$ be the open
  interval $(0, 2)$ in $\mathbb R$. Since $I$ is completely
  metrizable, it admits a compatible complete metric
  $\rho$ which is bounded by 1. Consider the set 
  $
  Z_{{\bf 0}} =\bigcup_{\alpha<\kappa}\left(I \times \{\alpha\}\right),
  $
  and define a metric $d_{{\bf 0}}$ of $Z_{{\bf 0}}$ by the formula
  \[
  d_{{\bf 0}}(\langle x,\alpha\rangle, \langle y,\beta\rangle)=
  \left \{\begin{array}{ll}
  \rho(x,y),
  & \mbox{if $\alpha =\beta$;}\\[0.5em]
  2, & \mbox{otherwise.}
  \end{array}
  \right.
  \]
  Then $(Z_{{\bf 0}}, d_{{\bf 0}})$ is a complete metric space,
  which is locally $\mathbb R$.

  We show that ${\mathbb I}^\kappa \setminus
  \{{\bf 0}\}$ embeds as a closed subspace in
  $2^{(Z_{\bf 0},d_{\bf 0})}$. Consider the mapping
  $\varphi: {\mathbb I}^\kappa\setminus\{{\bf 0}\}
  \to 2^{Z_{\bf 0}}$ defined by the formula 
  \[
  \varphi(y) = \bigcup\left\{(0, y_\alpha]\times \{
  \alpha\}: \alpha <\kappa \right\}.
  \]
  It is easy to see that $\varphi$ is one-to-one
  and the set $\varphi({\mathbb I}^\kappa \setminus
  \{{\bf 0}\})$ is closed in $2^{(Z_{\bf 0},d_{\bf 0})}$.
  Like in the proof of Lemma \ref{lem:multi}, we can show here
  that $\varphi$ transforms a subbase of
  ${\mathbb I}^\kappa\setminus\{{\bf 0}\}$ onto a subbase of
  the subspace $\varphi({\mathbb I}^\kappa \setminus
  \{{\bf 0}\})$ of $2^{(Z_{\bf 0},d_{\bf 0})}$. As a consequence,
  $\varphi$ is an embedding.

  Let $\widetilde T = {\mathbb I}^\kappa\setminus T$. By the foregoing, we
  know that for every $y \in \widetilde T$, there exists a locally $\mathbb R$
  and complete metric space $(Z_y, d_y)$ such that
  $Y_y={\mathbb I}^\kappa \setminus \{y \}$ embeds as a
  closed subspace in $2^{(Z_y,d_y)}$.
  Applying Lemma \ref{lem:multi}, we see that there
  exists a locally $\mathbb R$ and complete metric space $(X,d)$
  such that the product space $\prod \{Y_y: y\in \widetilde T\}$ embeds as
  a closed subspace in $2^{(X,d)}$.
  As in the proof of Theorem \ref{thm:0-dim},
  we see that $T$ is homeomorphic with the closed subspace
  $\Delta$ of $\prod \{Y_y: y \in \widetilde T\}$.
  This completes the proof.
  \end{proof}

   \end{document}